\newtheorem{theorem}{Theorem}[section]
\newtheorem{lemma}[theorem]{Lemma}
\newtheorem{corollary}[theorem]{Corollary}
\newtheorem{proposition}[theorem]{Proposition}
\theoremstyle{definition}
\newtheorem{example}[theorem]{Example}
\theoremstyle{remark}
\newtheorem{remark}[theorem]{Remark}
\numberwithin{equation}{section}
\begin{document}

\title{Left-orderability and exceptional Dehn surgery on twist knots}

%    Information for first author
\author{Masakazu Teragaito}
%    Address of record for the research reported here
\address{Department of Mathematics and Mathematics Education, Hiroshima University,
1-1-1 Kagamiyama, Higashi-hiroshima, Japan 739-8524}
%    Current address
%\curraddr{Department of Mathematics and Statistics,
%Case Western Reserve University, Cleveland, Ohio 43403}
\email{teragai@hiroshima-u.ac.jp}
%    \thanks will become a 1st page footnote.
\thanks{
Partially supported by Japan Society for the Promotion of Science,
Grant-in-Aid for Scientific Research (C), 22540088.
}%

%    General info
\subjclass[2010]{Primary 57M25; Secondary 06F15}

%\date{January 1, 1994 and, in revised form, June 22, 1994.}

%\dedicatory{This paper is dedicated to our authors.}

\keywords{left-ordering, twist knot, Dehn surgery}

\begin{abstract}
We show that any exceptional non-trivial Dehn surgery on a twist knot, except the trefoil,
yields a $3$-manifold whose fundamental group is left-orderable.
This is a generalization of a result of Clay, Lidman and Watson, and
also gives a new supporting evidence for a conjecture of Boyer, Gordon and Watson.
\end{abstract}

\maketitle

%%%%%%%%%%%%%%%%%%%%%%%%%%%%%%%%%%%%
\section{Introduction}

A group is \textit{left-orderable\/} if it admits a strict total ordering
which is invariant under left-multiplication.
It is well known that any knot group or link group is left-orderable (see \cite{BRW}).
More generally, many classes of $3$-manifolds are known to have
left-orderable fundamental groups.

Boyer, Gordon and Watson \cite{BGW} state a conjecture that
an irreducible rational homology $3$-sphere is an $L$-space if and only if
its fundamental group is not left-orderable.
Here, an $L$-space is a rational homology sphere $M$ whose Heegaard Floer homology
$\widehat{HF}(M)$ is a free abelian group of rank equal to $|H_1(M)|$, introduced by 
Ozsv\'{a}th and Szab\'{o} \cite{OS}.
This conjecture is verified for Seifert fibered manifolds, Sol manifolds, and
double branched covers of non-split alternating links in \cite{BGW}.

On the other hand, if a knot admits Dehn surgery yielding an $L$-space, referred to as an $L$-space surgery,
then there are some constraints for the knot.
For example, its Alexander polynomial has a specified form \cite{OS}, and
such knot must be fibered \cite{N}.
Therefore, it is not going too far to say that most knots do not admit an $L$-space surgery.
Thus we can expect that any non-trivial Dehn surgery on a hyperbolic knot, which does not admit an $L$-space surgery, yields a $3$-manifold whose fundamental group is left-orderable.

In this direction, 
Boyer, Gordon and Watson \cite{BGW} show that if $K$ is the figure-eight knot and
$-4<r<4$, then $r$-surgery on $K$ yields a $3$-manifold with left-orderable fundamental group.
Furthermore, Clay, Lidman and Watson \cite{CLW} show
that this also holds for $r=\pm 4$.
(Note that the figure-eight knot does not admit an $L$-space surgery.)

In this paper, we will examine the $m$-twist knot in the $3$-sphere, illustrated in
Figure \ref{fig:twistknot}.
We adopt the convention that the horizontal twists are right-handed if $m$ is positive,
left-handed if $m$ is negative.
Thus, the $1$-twist knot is the figure-eight knot, and the $(-1)$-twist knot is the right-handed trefoil.
Also, if $|m|\ge 2$, then the $m$-twist knot is hyperbolic and non-fibered (see \cite{BZ}).

\begin{figure}[ht]
\includegraphics*[scale=0.6]{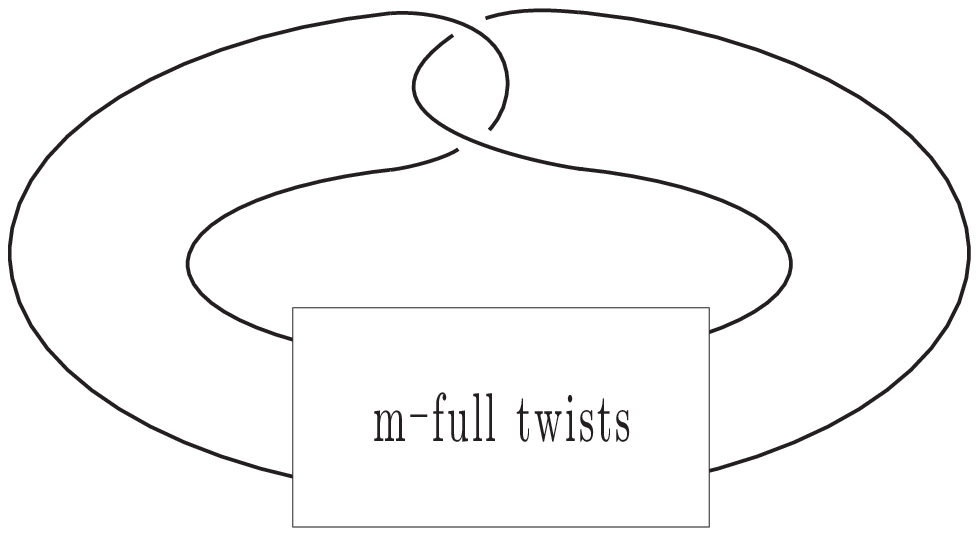}
\caption{}\label{fig:twistknot}
\end{figure}

The purpose of this paper is to verify that $4$-surgery on the $m$-twist knot with $|m|\ge 2$
yields a graph manifold whose fundamental group is left-orderable.
Since such a twist knot is non-fibered, it does not admits an $L$-space surgery.
(This fact also follows from the form of its Alexander polynomial.)
Thus the following theorem provides a new supporting evidence for
the conjecture of Boyer, Gordon and Watson mentioned above.

\begin{theorem}\label{thm:main}
Let $K$ be the $m$-twist knot with $|m|\ge 2$.
Then $4$-surgery on $K$ yields a graph manifold whose fundamental group is
left-orderable.
\end{theorem}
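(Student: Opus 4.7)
The plan is to start from the standard two-generator presentation $\pi_1(S^3 \setminus K_m) = \langle a, b \mid aw = wb \rangle$ of the $m$-twist knot group (with $a,b$ meridians and $w$ a specific word encoding the twist region), and append the $4$-surgery relation $\mu^4\lambda = 1$, which becomes $a^4 \lambda = 1$ after expressing the longitude $\lambda$ as a word in $a$ and $b$. This yields an explicit finite presentation of $G := \pi_1(M)$, where $M$ denotes the surgered manifold. Since slope $4$ is known to be toroidal for every twist knot $K_m$ with $|m|\ge 2$, $M$ contains an essential torus $T$; I would locate $T$ explicitly (coming from an essential surface in the exterior with boundary slope $4$) and read off the amalgam decomposition $G = G_1 *_{\pi_1(T)} G_2$, where each $G_i$ is the fundamental group of a Seifert fibered piece of the JSJ decomposition of $M$.

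To produce a left-ordering on $G$, I would follow the amalgamation route. Each Seifert piece has non-empty boundary and is a central extension of a $2$-orbifold group, hence has left-orderable fundamental group admitting a rich family of left-orderings; moreover, the restriction of such an ordering to the boundary $\mathbb{Z}^2$ is essentially determined by a choice of slope in $\mathbb{RP}^1$. I would then select orderings on $G_1$ and $G_2$ so that the two induced orderings on the common peripheral $\pi_1(T) \cong \mathbb{Z}^2$ coincide, and invoke the classical principle that an amalgam of left-orderable groups over a common left-orderable subgroup with matching induced orderings is itself left-orderable. As an alternative, in the spirit of \cite{CLW}, one could instead construct a one-parameter family of non-trivial representations $\rho_t \colon G \to \widetilde{\mathrm{PSL}}_2(\mathbb{R}) \hookrightarrow \mathrm{Homeo}_+(\mathbb{R})$ and pull back the natural left-ordering of $\mathrm{Homeo}_+(\mathbb{R})$.

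The main obstacle is the compatibility of orderings across the JSJ torus: matching forces an equality between two slopes in $\mathbb{RP}^1$ determined by the precise gluing between the Seifert pieces, and these gluing data depend sensitively on $m$. I therefore expect the argument to split into cases according to the sign of $m$ (and possibly its parity), with the small cases $m = \pm 2$ potentially requiring separate treatment because the Seifert pieces there can degenerate. Once the slope-matching identity is established in each case, left-orderability of $G$, and hence the theorem, follows immediately from the amalgamation criterion.
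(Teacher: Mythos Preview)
Your outline has the right shape, but the central step rests on a false principle. It is \emph{not} true that an amalgam $G_1 *_H G_2$ is left-orderable whenever one can choose a single left-ordering on each $G_i$ restricting to the same ordering on $H$; Bergman gave counterexamples. The correct criterion (Bludov--Glass, stated in the paper as Theorem~\ref{thm:BG}) is stronger: one needs \emph{normal families} $L_i \subset \mathrm{LO}(G_i)$ such that for every $P_1 \in L_1$ there is a $P_2 \in L_2$ matching it on $H$. Since $L_1$ must be conjugation-invariant, you are forced to match not just one ordering on $G_1$ but its entire conjugacy class, and on the boundary $\mathbb{Z}^2$ these conjugates typically do not all have the same ``slope''. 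So the slope-matching picture you sketch is not sufficient, and controlling what happens under conjugation is exactly where the work lies.

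Concretely, in the paper the JSJ pieces are identified (via the Montesinos trick, not by manipulating the Wirtinger presentation) as the $(2,2m+1)$ torus knot exterior and the twisted $I$-bundle over the Klein bottle, uniformly in $m$; there is no degeneration at $m=\pm 2$. On the torus knot side the paper takes $L_1$ to be the conjugacy class of Navas's ordering on $\Gamma_{2|m|}$ (the Dubrovina--Dubrovin ordering on $B_3$ being the special case $m=1$ used in \cite{CLW}), and on the Klein bottle side $L_2$ is one of the two normal families $\{<_{++},<_{+-}\}$ or $\{<_{-+},<_{--}\}$ depending on the sign of $m$. The compatibility across the torus is precisely Lemma~\ref{lem:propS}, a weak ``Property~S'' statement bounding $g^{-1}\mu^r g$ between $\Delta^{-1}$ and $\Delta$ for every $g$; this is the missing idea in your proposal. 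Your alternative via representations into $\widetilde{\mathrm{PSL}}_2(\mathbb{R})$ is plausible but is a different programme and would need to be carried out in full; it is not what \cite{CLW} does for the figure-eight.
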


Our argument follows that of Clay, Lidman and Watson \cite[Section 4]{CLW} for the case of
$4$-surgery of the figure-eight knot.
They make use of the Dubrovina-Dubrobin ordering for the braid group $B_3$ of order $3$,
which is isomorphic to the knot group of the trefoil, but
we need some left-orderings for torus knot groups defined by Navas \cite{Na}.

By combining with known results,
we can immediately prove the following.

\begin{corollary}\label{cor}
Let $K$ be a hyperbolic twist knot.
Then any exceptional non-trivial Dehn surgery on $K$ yields
a $3$-manifold whose fundamental group is left-orderable.
\end{corollary}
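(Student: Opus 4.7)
The plan is to combine Theorem~\ref{thm:main} with the classification of exceptional Dehn surgeries on twist knots and the known cases of the Boyer--Gordon--Watson conjecture.

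I would first split into two cases according to whether $K$ is the figure-eight knot or an $m$-twist knot with $|m|\ge 2$. If $K$ is the figure-eight knot, its exceptional non-trivial slopes are $0,\pm 1,\pm 2,\pm 3,\pm 4$: left-orderability for the slopes in the interval $(-4,4)$ is proved in \cite{BGW}, and for the slopes $\pm 4$ in \cite{CLW}, so this case is already complete.

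For $K$ equal to the $m$-twist knot with $|m|\ge 2$, I would invoke the Brittenham--Wu classification of exceptional surgeries on $2$-bridge knots to obtain the short list of exceptional slopes on $K$ together with the geometric type of each surgered manifold. Up to replacing $K$ by its mirror, the only exceptional surgery producing a non-Seifert graph manifold is $4$-surgery, and this is exactly the content of Theorem~\ref{thm:main}. Each remaining exceptional surgery yields either a small Seifert fibered space or a Sol manifold. Since a hyperbolic twist knot is non-fibered, it admits no $L$-space surgery by \cite{N}, so none of these surgered manifolds is an $L$-space, and the verifications in \cite{BGW} of the Boyer--Gordon--Watson conjecture for Seifert fibered manifolds and for Sol manifolds supply left-orderability of the fundamental group in every remaining case.

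The main obstacle is the bookkeeping step: one needs to tabulate, for each $m$ with $|m|\ge 2$, the exceptional slopes on $K_m$ and the geometric type of each surgered manifold, in order to be sure that every exceptional non-trivial slope lies either in the scope of Theorem~\ref{thm:main} or in the classes of $3$-manifolds already handled in \cite{BGW}. Once this table is in hand, the corollary reduces to a routine case-check.
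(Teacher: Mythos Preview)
Your overall strategy matches the paper's: dispose of the figure-eight via \cite{BGW,CLW}, and for $|m|\ge 2$ list the exceptional slopes using Brittenham--Wu \cite{BW} and treat them case by case, with Theorem~\ref{thm:main} covering slope $4$. The paper records that the exceptional non-trivial slopes are exactly $0,1,2,3,4$, with $r=1,2,3$ giving small Seifert fibered manifolds; those three cases are handled just as you propose, via \cite{N} (no $L$-space surgery on a non-fibered knot) together with \cite[Theorem~4]{BGW}.

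The gap is your treatment of the slope $r=0$. You assert that every remaining exceptional surgery yields a small Seifert fibered space or a Sol manifold, and then appeal to the $L$-space argument, but $0$-surgery on $K$ fits neither description and lies outside that framework. First, $0$-surgery on any knot has first Betti number $1$, so it is not a rational homology sphere and the notion of $L$-space does not apply. Second, it is not a Sol manifold: a closed Sol manifold with $b_1=1$ is a torus bundle over $S^1$, whereas $0$-surgery on a non-fibered knot does not fiber over $S^1$. The paper handles this slope by a different and simpler route: since the result has positive first Betti number, its fundamental group is left-orderable by \cite{BRW}. Once you replace your Sol/Seifert claim for $r=0$ with this observation, your argument is complete and agrees with the paper's.
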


The author would like to thank Tetsuya Ito for useful comments on left-orderings.

%%%%%%%%%%%%%%%%%%%%
\section{Fundamental group}

Let $K$ be the $m$-twist knot.
We can assume that $m\ne 0,-1$.
It is well known that $4$-surgery on $K$ yields a toroidal manifold.
In fact, the manifold is a graph manifold.
In this section, we will examine the structure of the manifold
by using the Montesinos trick, and get a presentation of its fundamental group.

As shown in Figure \ref{fig:montesinos},
put $K$ in a symmetric position.
By taking the quotient under the involution map,
we have a tangle description of the knot exterior.
This means that the double branched cover of the (outside) ball branched over
the two strings recovers the knot exterior.

\begin{figure}[htbp]
\includegraphics*[scale=0.7]{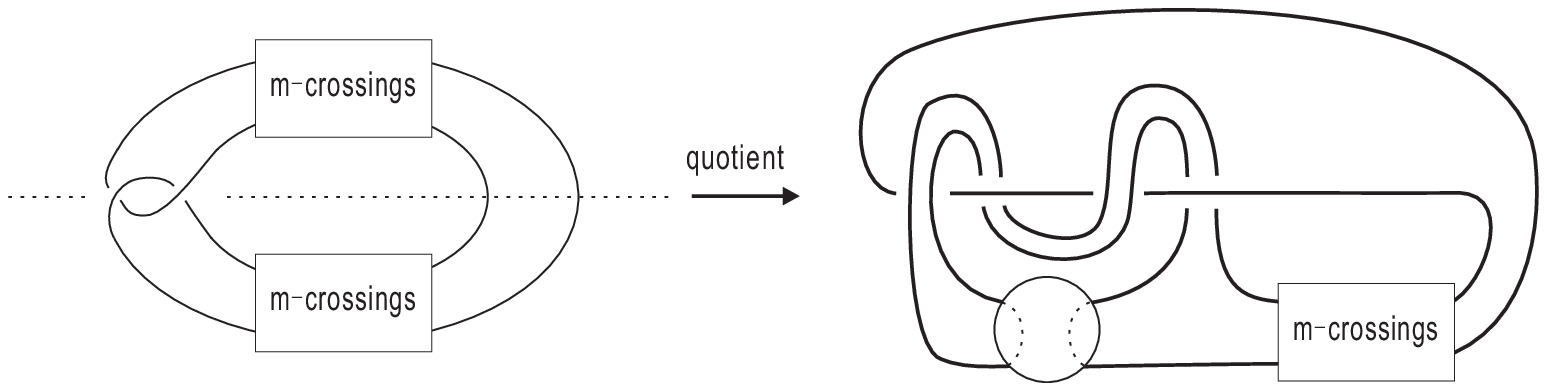}
\caption{}\label{fig:montesinos}
\end{figure}

If we fill the $\infty$-tangle, as indicated by dotted lines in Figure \ref{fig:montesinos}, into the inner ball,
then it gives an unknot.
Here, we choose the framing so that
the $0$-tangle filling corresponds to $4$-surgery.
Figure \ref{fig:montesinos2} shows the $0$-tangle filling yields a link
with a trivial component.
Let $S$ be the $2$-sphere illustrated there which gives
an essential tangle decomposition.

\begin{figure}[htbp]
\includegraphics*[scale=0.7]{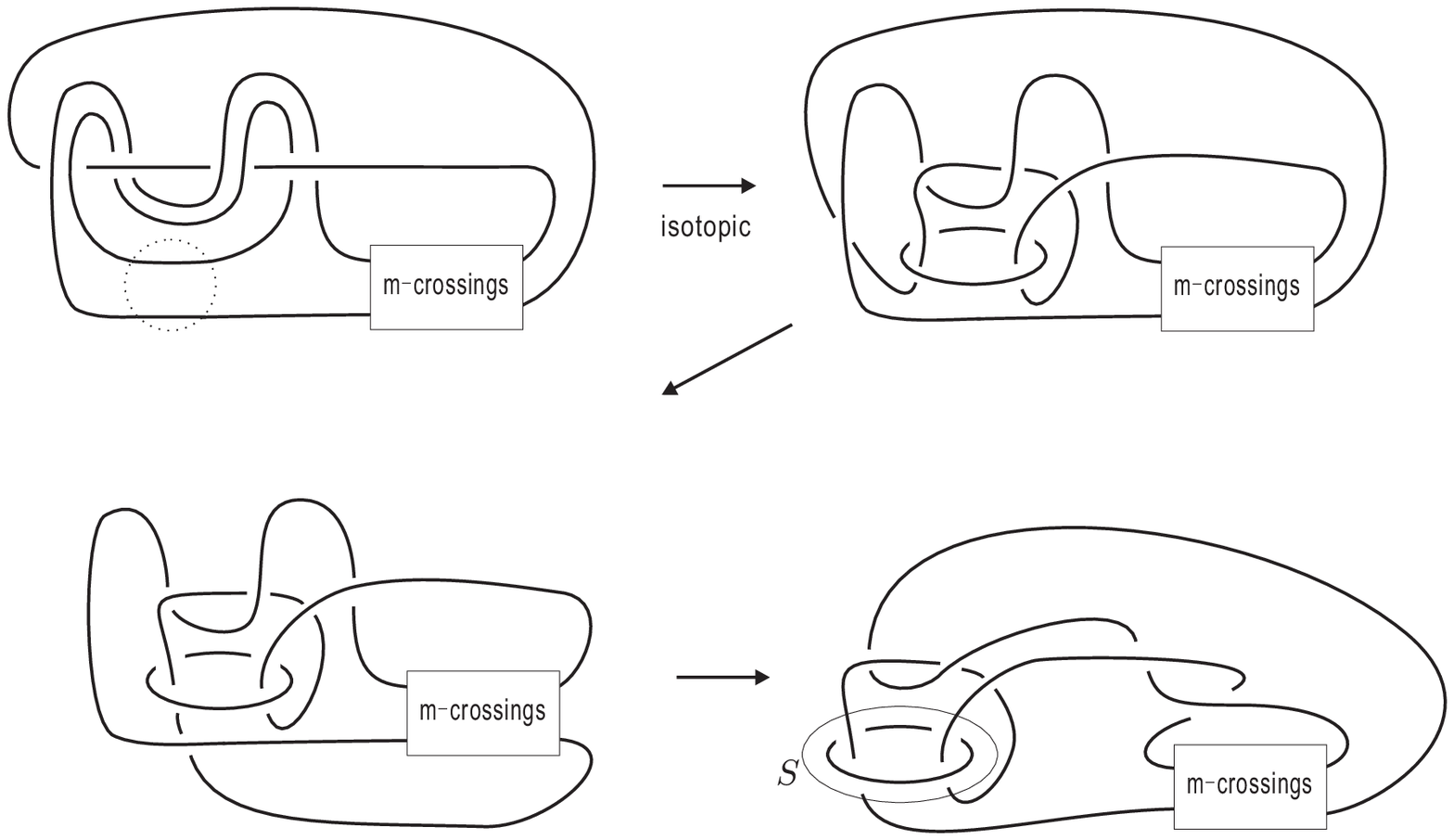}
\caption{}\label{fig:montesinos2}
\end{figure}

One side of $S$ is the Montesinos tangle $M(-1/2,-m/(2m+1))$, and the other
side is the Montesinos tangle $M(-1/2,1/2)$.
Then the double branched cover $M_1$ of $M(-1/2,-m/(2m+1))$ is the exterior of the torus knot of type $(2,2m+1)$, and the double cover $M_2$  of $M(-1/2,1/2)$ is the twisted $I$-bundle over the Klein
bottle.
Thus the resulting manifold $M$ of $4$-surgery on $K$ is $M_1\cup M_2$.

We have $\pi_1(M_1)=\langle a,b : a^2=b^{2m+1}\rangle$.
See Figure \ref{fig:torusknot}.
The meridian $\mu$ is $b^{-m}a$ and the regular fiber $h$ with respect to a (unique)
Seifert fibration, is $a^2\ (=b^{2m+1})$.

\begin{figure}[htbp]
\includegraphics*[scale=0.5]{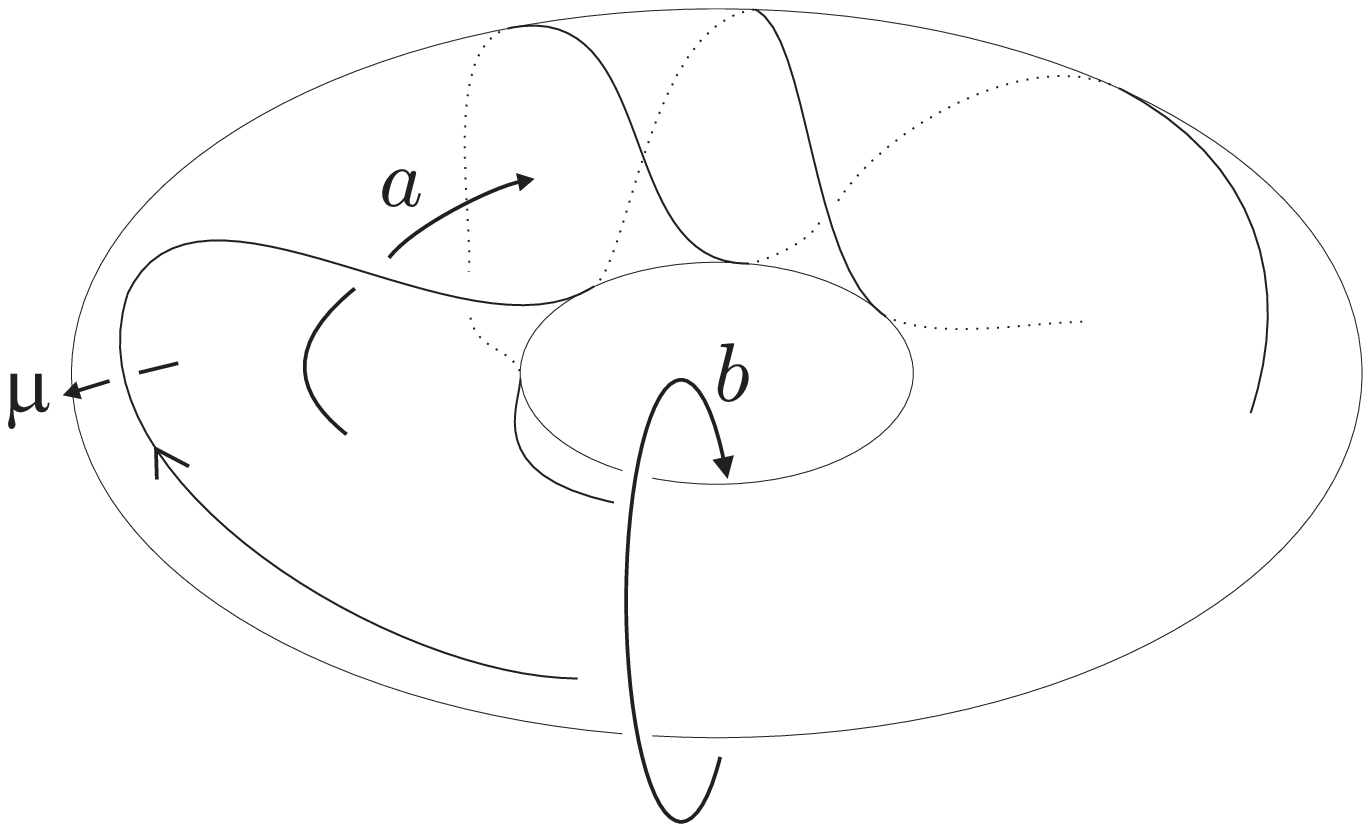}
\caption{}\label{fig:torusknot}
\end{figure}

It is well known that $M_2$ admits two Seifert fibrations.
One is a fibration over the disk with two exceptional fibers of index $2$, and the other 
is that over the M\"{o}bius band with no exceptional fiber.
Then we can choose the generators $\{x,y\}$ of $\pi_1(M_2)$ so that
$x$ corresponds to an exceptional fiber in the first fibration, and 
$y$ corresponds to a regular fiber in the second fibration.
Thus we obtain that $\pi_1(M_2)=\langle x, y : x^{-1}yx=y^{-1}\rangle$, and that
$\pi_1(\partial M_2)$ is generated by $x^2$ and $y$.

To get a presentation of $\pi_1(M)$, we have to examine the identification between $\partial M_1$
and $\partial M_2$.

\begin{lemma}
Under the identification between $\partial M_1$ and $\partial M_2$, 
$\mu$ and $h$ on $\partial M_1$ correspond to $y^{-1}$ and $y^{-1}x^2$ on $\partial M_2$, respectively.
\end{lemma}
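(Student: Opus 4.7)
The plan is to lift essential simple closed curves from the Conway sphere $S$ to the boundary tori of $M_1$ and $M_2$. Since the double cover of $S$ is a single torus, naturally identified with both $\partial M_1$ and $\partial M_2$, once $\mu$, $h$, $y$, $x^2$ are expressed as specific slopes on $S$, the lemma will follow by direct comparison.

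Concretely, on the pillowcase $S$ any rational slope $p/q$ determines an essential simple closed curve on the $4$-punctured sphere, and each such curve lifts to a simple closed curve on the torus double cover; any two lifts of distinct slopes form a basis of $H_1$. On the $M_1$ side, the double cover of $M(-1/2,-m/(2m+1))$ carries a canonical Seifert fibration over the disk with two exceptional fibers of indices $2$ and $2m+1$. The regular fiber $h=a^2$ is the lift of a specific slope on $S$, read off from the Montesinos structure, while the meridian $\mu=b^{-m}a$ corresponds to the slope whose filling downstairs unknots the torus knot upstairs. On the $M_2$ side, the tangle $M(-1/2,1/2)$ has double cover the twisted $I$-bundle over the Klein bottle; using its two Seifert fibrations, I would express $y$ (the regular fiber of the M\"obius-band fibration) and $x^2$ as lifts of two further slopes on $S$.

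With slopes pinned down on both sides, the identification of $\partial M_1$ with $\partial M_2$ induced by the identity on $S$ yields a matrix in $GL_2(\mathbb Z)$ relating the bases $\{\mu,h\}$ and $\{y,x^2\}$. The lemma amounts to the assertion that this matrix is $\begin{pmatrix}-1&-1\\0&1\end{pmatrix}$ in the obvious ordering, giving $\mu=y^{-1}$ and $h=y^{-1}x^2$.

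The main obstacle is sign and orientation bookkeeping. Each presentation carries implicit conventions: the choice $\mu=b^{-m}a$ rather than $b^{m}a^{-1}$, the form $x^{-1}yx=y^{-1}$ of the Klein bottle relation, and the orientation of lifts from $S$ to the torus. A small slip can flip $y^{-1}$ to $y$ or $x^2$ to $x^{-2}$ and invalidate the stated formula. A useful consistency check is to compute $H_1$ of the resulting graph manifold from the proposed gluing via Mayer--Vietoris; it must equal $\mathbb Z/4$, as required for $4$-surgery on a knot in $S^3$, which eliminates most of the sign-ambiguous alternatives.
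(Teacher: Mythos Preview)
Your outline identifies the correct framework---both $\partial M_1$ and $\partial M_2$ arise as the torus double cover of the pillowcase $S$, so matching curves on $S$ is indeed how one compares bases---but as written it is a plan rather than a proof. You never actually compute any of the four slopes; the phrases ``read off from the Montesinos structure'' and ``I would express $y$ \dots\ and $x^2$'' defer precisely the work the lemma requires. You then acknowledge that the main obstacle is the sign and orientation bookkeeping, and propose a Mayer--Vietoris check to resolve it. That check is not sufficient: since $y$ has order $2$ in $H_1(M_2)\cong\mathbb{Z}\oplus\mathbb{Z}/2$, the homology computation cannot distinguish $\mu\mapsto y$ from $\mu\mapsto y^{-1}$, and several inequivalent gluing matrices produce $H_1(M)\cong\mathbb{Z}/4$. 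So the check rules out gross errors but does not single out the matrix $\bigl(\begin{smallmatrix}-1&-1\\0&1\end{smallmatrix}\bigr)$ you assert.

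The paper's proof follows the same overall strategy but supplies the missing computation by a concrete geometric move. It fixes two explicit loops $z,w$ on the boundary of the tangle $M(-1/2,1/2)$ whose lifts are $x^2$ and $y$, then \emph{replaces} that tangle by the $0$-tangle while tracking $z$ via an inserted band $f$. In the resulting unknot diagram, $f$ lifts to an annulus whose core is the $(2,2m+1)$ torus knot, and the band framing is read off as slope $4m+1$. Comparing with the known regular-fiber slope $4m+2$ yields $x^2=\mu^{-1}h$ on the nose, and the lift of $w$ gives the meridian. This band-framing computation is exactly the step your proposal is missing; without it (or an equivalent explicit calculation of the four slopes on $S$), the argument does not establish the specific correspondence claimed.
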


\begin{proof}
Consider two loops $z$ and $w$ on the boundary of 
the Montesinos tangle $M(-1/2,1/2)$ as illustrated in Figure \ref{fig:tangle}.
Then $z$ and $w$ lift to two copies of $x^2$ and $y$, respectively
(see \cite[Chapter 12]{BZ}).

\begin{figure}[htbp]
\includegraphics*[scale=0.5]{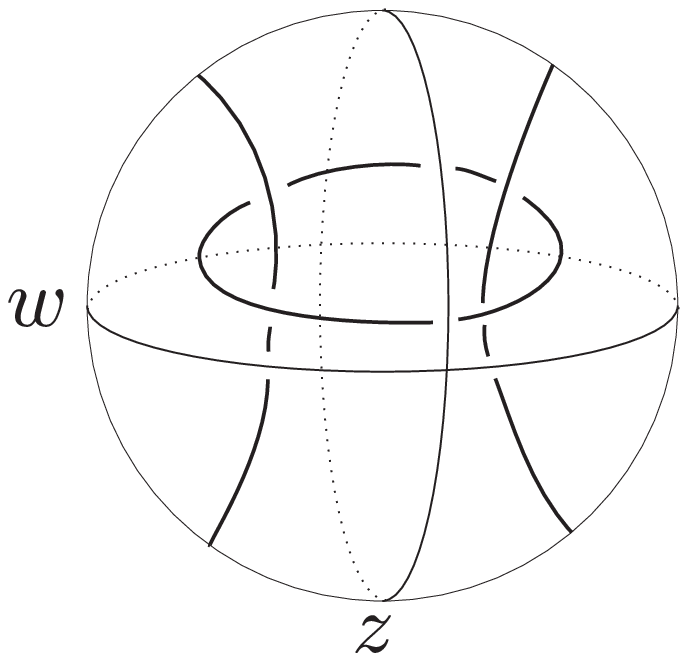}
\caption{}\label{fig:tangle}
\end{figure}

Next, we replace the Montesinos tangle $M(-1/2,1/2)$ with the $0$-tangle as shown in Figure \ref{fig:identify}, where we insert a narrow band $f$ to chase $z$.
The result is a trivial knot.
By taking the double branched cover along this trivial knot,
the band $f$ lifts to a knotted annulus, whose core forms the torus knot of type $(2,2m+1)$.
(This proves that the double branched cover of $M(-1/2,-m/(2m+1))$ is the exterior of
the torus knot of type $(2,2m+1)$.)
Also, the framing determined by $f$ has slope $(4m+1)/1$.
Recall that $h$ has slope $(4m+2)/1$.
Hence we can choose the orientations of $x$ and $y$ so that 
$y^{-1}$ and $x^2$ correspond to the meridian $\mu$ and
$\mu^{-1}h$, respectively.
\end{proof}

\begin{figure}[htbp]
\includegraphics*[scale=0.7]{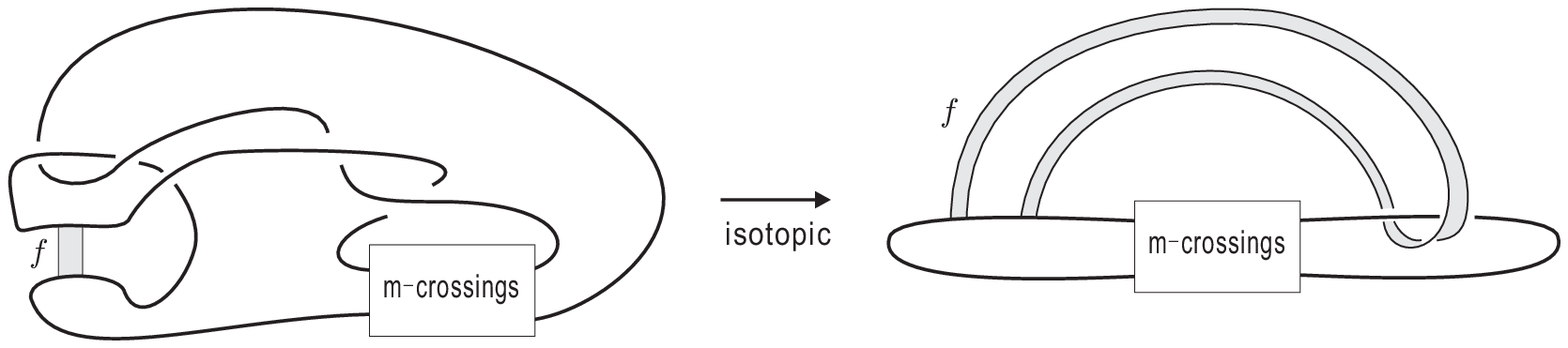}
\caption{}\label{fig:identify}
\end{figure}

%%%%%

Thus we have shown the following.

\begin{proposition}\label{prop:pi1}
Let $K$ be the $m$-twist knot with $m\ne 0,-1$.
Then $4$-surgery on $K$ yields a graph manifold $M$
which is the union of the twisted $I$-bundle over the Klein bottle and the knot
exterior of torus knot of type $(2,2m+1)$.
Furthermore, its fundamental group has a presentation
\[
\pi_1(M)=\langle a,b,x,y : a^2=b^{2m+1}, x^{-1}yx=y^{-1}, \mu=y^{-1}, h=y^{-1}x^2\rangle,
\]
where $\mu=b^{-m}a$ and $h$ correspond to a meridian and
a regular fiber of the torus knot exterior \textup{(}with the Seifert fibration\textup{)}, respectively. 
\end{proposition}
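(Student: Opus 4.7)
The plan is to assemble the proposition directly from the pieces already in place: the Montesinos-trick description of $M$ as a union of two Seifert fibered pieces, the standard presentations of the two pieces, and the gluing data supplied by the preceding Lemma. Seifert--van Kampen then does the rest.

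First I would record the decomposition. The discussion before the proposition shows that, after rechoosing the framing so that $0$-tangle filling of the inner ball corresponds to $4$-surgery, the sphere $S$ of Figure \ref{fig:montesinos2} gives an essential tangle decomposition of the resulting link exterior into $M(-1/2,-m/(2m+1))$ and $M(-1/2,1/2)$. Passing to the double branched cover, $S$ lifts to an incompressible torus $T \subset M$ that splits $M$ into $M_1$, the exterior of the $(2,2m+1)$ torus knot, and $M_2$, the twisted $I$-bundle over the Klein bottle. Both pieces are Seifert fibered, so $M = M_1 \cup_T M_2$ is a graph manifold, which takes care of the first half of the statement. (The hypothesis $m \neq 0, -1$ is needed so that the $(2,2m+1)$ torus knot piece is genuinely non-trivial; otherwise the decomposition degenerates.)

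Next I would quote the two standard presentations already recorded in the text, namely
\[
\pi_1(M_1)=\langle a,b \mid a^2=b^{2m+1}\rangle, \qquad \pi_1(M_2)=\langle x,y \mid x^{-1}yx=y^{-1}\rangle,
\]
with $\mu = b^{-m}a$, $h = a^2$ on $\partial M_1$ and $\pi_1(\partial M_2)=\langle x^2, y\rangle$. The Lemma proved just above supplies the gluing: under the identification $\partial M_1 \leftrightarrow \partial M_2$, one has $\mu \mapsto y^{-1}$ and $h \mapsto y^{-1}x^2$. Since $\{\mu, h\}$ generates $\pi_1(\partial M_1)$, these two identifications determine the entire gluing homomorphism.

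Finally I would apply Seifert--van Kampen to $M = M_1 \cup_T M_2$. Because $T$ is incompressible in both pieces, $\pi_1(M)$ is the amalgamated free product $\pi_1(M_1) *_{\pi_1(T)} \pi_1(M_2)$, which is presented by taking the presentations of $\pi_1(M_1)$ and $\pi_1(M_2)$ together and adjoining, for a chosen generating set of $\pi_1(T)$, the relations equating its image in the two pieces. Using the generating set $\{\mu, h\}$ and the Lemma, the only new relations are $\mu = y^{-1}$ and $h = y^{-1}x^2$, which is exactly the displayed presentation. There is no real obstacle here once the Lemma is in hand; the only thing to be careful about is making sure the sphere $S$ really lifts to an incompressible torus (justifying the clean amalgamated product), which follows from the essentiality of the tangle decomposition noted in the construction.
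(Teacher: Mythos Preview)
Your proposal is correct and matches the paper's approach exactly: the paper gives no separate proof of this proposition, simply writing ``Thus we have shown the following'' after the Montesinos-trick decomposition, the two standard presentations, and the gluing Lemma --- precisely the pieces you assemble. One small remark: Seifert--van Kampen yields the displayed presentation regardless of incompressibility of $T$ (incompressibility is what makes the amalgamation injective, but the generators-and-relations description is the pushout either way), so that caveat is not strictly needed here.
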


\begin{remark}
Our presentation in Proposition \ref{prop:pi1} is equivalent to that of \cite{CLW}
for the case $m=1$.
\end{remark}

%%%%%%%%%%%%%%%%%%%%%%%%%%%%%%
\section{Normal families of left-orderings}

Let $G$ be a left-orderable non-trivial group.
This means that $G$ admits a strict total ordering $<$ such that
$a<b$ implies $ga<gb$ for any $g\in G$. 
This is equivalent to the existence
of a \textit{positive cone\/} $P \ (\ne \varnothing)$, which is a semigroup and
gives a disjoint decomposition
$P\sqcup \{1\} \sqcup P^{-1}$.
For a given left-ordering $<$, the set
$P=\{g\in G\mid g>1\}$ gives a positive cone.
Any element of $P$ (resp.~$P^{-1}$) is said to be \textit{positive\/} (resp.~\textit{negative\/}).
Conversely, given a positive cone $P$,
declare $a<b$ if and only if $a^{-1}b\in P$.
This defines a left-ordering.

We denote by $\mathrm{LO}(G)$ the set of all positive cones in $G$.
This is regarded as the set of all left-orderings of $G$ as mentioned above.
For $g\in G$ and $P\in \mathrm{LO}(G)$, let $g(P)=g^{-1}Pg$.
This gives a $G$-action on $\mathrm{LO}(G)$.
In other words,
for a left-ordering $<$ of $G$,
an element $g$ sends $<$ to a new left-ordering $<^g$ defined as follows:
$a<^g b$ if and only if $ag<bg$.
We say that $<$ and $<^g$ are \textit{conjugate\/} orderings.
Also, a family $L\subset \mathrm{LO}(G)$ is said to be \textit{normal\/} if it is $G$-invariant.

\begin{example}\label{ex:kb}
Let $G=\langle x,y : x^{-1}yx=y^{-1}\rangle$.
This is the fundamental group of the Klein bottle.
It is known that $G$ admits exactly four left-orderings.
We will define two normal families $L_{+}$ and $L_{-}$ of left-orderings as follows.
Consider a short exact sequence
\[
1\to \langle y \rangle \to G \to \langle x\rangle \overset{q}{\to} 1.
\]

For $g\in G$, define $1<_{++} g$ if $q(g)=x^s$ with $s>0$, or
$q(g)=1$ and $g=y^r$ with $r>0$.
Similarly, 
define $1<_{+-} g$ if $q(g)=x^s$ with $s>0$, or
$q(g)=1$ and $g=y^r$ with $r<0$.
Then we can easily prove that 
$L_{+}=\{<_{++},<_{+-}\}$ gives a normal family of $\mathrm{LO}(G)$.

Similarly, 
define $1<_{-+} g$ if $q(g)=x^s$ with $s<0$, or
$q(g)=1$ and $g=y^r$ with $r>0$.
And,
define $1<_{--} g$ if $q(g)=x^s$ with $s<0$, or
$q(g)=1$ and $g=y^r$ with $r<0$.
Then $L_{-}=\{<_{-+},<_{--}\}$ gives another normal family.
\end{example}

We need one more notion.
For $i=1,2$,
let $G_i$ be a left-orderable group and $H_i$ a subgroup of $G_i$, and
let $L_i\subset \mathrm{LO}(G_i)$ be a family of left-orderings.
Let $\phi:H_1\to H_2$ be an isomorphism.
We call that $\phi$ is \textit{compatible\/} for the pair $(L_1,L_2)$ if
for any $P_1\in L_1$, there exists $P_2\in L_2$ such that
$h_1\in P_1$ implies $\phi(h_1)\in P_2$ for any $h_1\in H_1$.

\begin{theorem}[Bludov-Grass \cite{BG}]\label{thm:BG}
For $i=1,2$,
let $G_i$ be a left-orderable group and $H_i$ a subgroup of $G_i$.
Let $\phi:H_1\to H_2$ be an isomorphism.
Then the free product with amalgamation $G_1*G_2\ (H_1\overset{\phi}{\cong} H_2)$
is left-orderable if and only if there exist normal families $L_i\subset \mathrm{LO}(G_i)$
for $i=1,2$ such that $\phi$ is compatible for $(L_1,L_2)$.
\end{theorem}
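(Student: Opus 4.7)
The plan is to address the two implications of this biconditional separately, since one direction is essentially bookkeeping with conjugate orderings while the other requires a genuine construction.

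For ($\Rightarrow$): assume $G = G_1 \ast_\phi G_2$ admits a positive cone $P$. For every $g \in G$, the conjugate $g^{-1}Pg$ is again a positive cone, and $P_i^g := g^{-1}Pg \cap G_i$ is a positive cone on $G_i$. Set $L_i := \{P_i^g : g \in G\}$. Normality of $L_i$ under the $G_i$-action is immediate: conjugating further by $g_i \in G_i \subset G$ yields $P_i^{g g_i} \in L_i$. For compatibility, pair $P_1^g$ with $P_2^g$. Since $H_1$ is identified with $H_2$ inside $G$ via $\phi$, both restrict to the same subset $g^{-1}Pg \cap H$ of $H$, so any $h_1 \in P_1^g \cap H_1$ satisfies $\phi(h_1) \in P_2^g \cap H_2$.

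For ($\Leftarrow$): given compatible normal families $L_1, L_2$, the goal is to construct a positive cone on $G$. I would lean on the normal form theorem for amalgamated free products, which writes each nonidentity $g \in G$ uniquely as $g = h \cdot g_{\epsilon(1)} g_{\epsilon(2)} \cdots g_{\epsilon(n)}$, with $h \in H$, each $g_{\epsilon(k)}$ a fixed right coset representative of $H$ in $G_{\epsilon(k)} \setminus H$, and indices $\epsilon(k) \in \{1,2\}$ alternating. The idea is a recursion: pick an initial compatible pair $(P_1, P_2) \in L_1 \times L_2$; use $P_{\epsilon(n)}$ to assign a sign to the rightmost syllable $g_{\epsilon(n)}$; then invoke normality of $L_{\epsilon(n)}$ to replace $P_{\epsilon(n)}$ by a conjugate ordering still in $L_{\epsilon(n)}$, use compatibility at $H$ to swap to a matching partner ordering from the other family, absorb the syllable, and descend to a shorter normal form.

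The main obstacle is verifying that this recursion produces a well-defined positive cone: one must check that the resulting subset $P \subseteq G$ satisfies $P \cdot P \subseteq P$ and the trichotomy $P \sqcup \{1\} \sqcup P^{-1} = G$, and that the output does not depend on the choice of initial compatible pair or coset representatives. Normality is exactly what is needed to handle the rearrangements forced by left-multiplication (which can both lengthen normal forms and trigger cancellations across the $H$-interface), while compatibility ensures that each transition between $G_1$ and $G_2$ syllables is coherent on $H$. A more conceptual packaging would route the construction through the Bass-Serre tree $T$ of the amalgam, on which $G$ acts with vertex stabilizers conjugate to $G_1, G_2$ and edge stabilizers conjugate to $H$: a coherent choice of orderings from $L_1, L_2$ on the stabilizers should yield a $G$-invariant ordering that lifts to a left-ordering on $G$ itself, with normality and compatibility playing the roles of equivariance along vertices and agreement along edges, respectively.
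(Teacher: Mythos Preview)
The paper does not supply its own proof of this theorem: it is quoted from Bludov and Glass \cite{BG} and used as a black box in the proof of Theorem~\ref{thm:main}. There is therefore nothing in the paper to compare your argument against.

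On the merits of your sketch: the forward direction is correct and is the standard argument. If $P$ is a positive cone on $G$, then the families $L_i = \{g^{-1}Pg \cap G_i : g \in G\}$ are $G_i$-invariant (since $G_i \subset G$), and the common restriction to $H$ gives compatibility. This is fine.

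The backward direction is where the content lies, and here you have identified the right architecture (recursion on normal forms, with normality handling conjugation and compatibility handling the transitions across $H$) but you have not carried out the proof. You say as much: ``The main obstacle is verifying that this recursion produces a well-defined positive cone.'' That verification is the theorem. In particular, the step where you ``use compatibility at $H$ to swap to a matching partner ordering from the other family'' needs compatibility to work in \emph{both} directions (for every $P_1 \in L_1$ there is a matching $P_2 \in L_2$, and vice versa), and you have only assumed one direction in the definition you were given; one must check that the hypotheses are in fact symmetric, or else strengthen them. Moreover, showing that the sign assigned to $g$ is independent of the choices made along the recursion, and that the resulting set is closed under multiplication when two normal forms interact with cancellation, is genuinely delicate and is the substance of the Bludov--Glass paper. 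Your Bass--Serre remark is a reasonable heuristic but does not by itself produce an ordering on $G$; acting on a tree gives information about the group but not automatically a left-ordering.

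In short: your ($\Rightarrow$) is a proof; your ($\Leftarrow$) is a plausible outline that correctly locates the difficulty without resolving it.
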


%%%%%%
\section{An ordering of torus knot group}

For $n\ge 1$, let $\Gamma_n=\langle b,c: b=cb^{n}c\rangle$.
Navas \cite{Na} proved that the semigroup generated by $\{b,c\}$
gives a positive cone, so, a left-ordering of $\Gamma_n$.
In other words, 
an element $w\in \Gamma_n$ is positive (resp.~negative) if
$w$ can be written in only positive  (resp.~negative) powers of $b, c$.

Let $\Delta=b^{n+1}$. Then $\Delta>1$.
It is easy to see that $\Delta$ is central. 
(In fact, $\Delta$ generates the center of $\Gamma_n$.)
Also, $b^{-1}=b^n\Delta^{-1}$ and $c^{-1}=b^ncb^n\Delta^{-1}$.
Thus as Navas observes,
every element $w\in \Gamma_n$ can be written in a form $u\Delta^\ell$
for some trivial or positive $u$ and $\ell\in \mathbb{Z}$.

Furthermore, he shows that every element $w\in \Gamma_n$ has a \textit{normal form}
\[
w=c^{n_0}b^{m_1}c^{n_1}\cdots c^{n_k-1}b^{m_k}c^{n_k}\Delta^\ell=u\Delta^\ell
,\]
with the properties
\begin{itemize}
\item[(i)] $n_i>0$ for $0<i<k$, $n_0\ge 0$, $n_k\ge 0$;
\item[(ii)] $m_i\in \{1,2,\dots,n-1\}$ for $1<i<k$;
\item[(iii)] $m_1\in \{1,2,\dots,n-1\}$ (resp.~$\{1,2,\dots,n\}$) if
$n_0>0$ (resp.~$n_0=0$); similarly, $m_k\in \{1,2,\dots,n-1\}$ (resp.~$\{1,2,\dots,n\}$)
if $n_k>0$ (resp.~$n_k=0$);
\end{itemize}
and $\ell\in \mathbb{Z}$.

The next is proved in section 2 of \cite{Na}.

\begin{lemma}[\cite{Na}]\label{lem:navas}
Let $w=u\Delta^\ell$ be a normal form of a non-trivial element $w\in \Gamma_n$.
\begin{itemize}
\item[(i)] If $u=1$, then $w$ is positive or negative, according to the sign of $\ell$.
\item[(ii)] If $u\ne 1$ and $\ell\ge 0$, then $w$ is positive.
If $u\ne 1$ and $\ell<0$, then $w$ is negative.
\end{itemize}
\end{lemma}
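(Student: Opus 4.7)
My plan is to split the proof into three cases, with only the last being substantive. Part (i) is immediate from $\Delta = b^{n+1}$: if $\ell > 0$ then $w = b^{(n+1)\ell}$ is a positive power of $b$ and hence lies in the semigroup generated by $\{b,c\}$, while if $\ell < 0$ the inverse $w^{-1}$ is such a power, so $w$ is negative. Part (ii) with $\ell \ge 0$ is just as easy: the factorization $w = u \cdot b^{(n+1)\ell}$ displays $w$ as a product of the non-trivial positive word $u$ with a non-negative power of $b$, so $w$ lies in the positive cone.

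The main task is the case $u \ne 1$ with $\ell < 0$, where I would exhibit $w^{-1}$ as a positive word. First, apply the identities $b^{-1} = b^n \Delta^{-1}$ and $c^{-1} = b^n c b^n \Delta^{-1}$ (together with the centrality of $\Delta$) to each of the $|u| := n_0 + m_1 + n_1 + \cdots + m_k + n_k$ inverse letters of $u^{-1}$, obtaining a factorization $u^{-1} = v \Delta^{-|u|}$ with $v$ a positive word in $\{b,c\}$. Then $w^{-1} = \Delta^{|\ell|} u^{-1} = v\,\Delta^{|\ell| - |u|}$. When $|\ell| \ge |u|$ this is already a product of positive elements and we are done.

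The delicate sub-case is $0 < |\ell| < |u|$, where I would extract further powers of $\Delta$ from $v$ using $b^{n+1} = \Delta$. The key combinatorial observation is that each of the $|u|$ rewrites begins and ends with $b^n$, so the $|u|-1$ junctions between consecutive rewrites produce factors $b^n \cdot b^n = b^{2n} = \Delta \cdot b^{n-1}$, each yielding one extractable $\Delta$. Writing $v = \Delta^M v'$ with $v'$ positive and $M \ge |u|-1$, and using $|\ell| \ge 1$, we obtain
\[
w^{-1} = v' \, \Delta^{|\ell| + M - |u|},
\]
whose exponent is at least $|\ell| - 1 \ge 0$, so $w^{-1}$ is a product of positive elements. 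The edge case $v' = 1$ together with $|\ell| = 1$ would force $w = 1$, contradicting $u \ne 1$ by uniqueness of the normal form; hence $w^{-1}$ is positive and $w$ is negative, as required.

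The main obstacle is the accounting $M \ge |u|-1$. Uniformity comes from the fact that \emph{every} rewrite of $b^{-1}$ or $c^{-1}$ begins and ends with $b^n$, so each of the $|u|-1$ junctions contributes one full $\Delta$; but verifying that the normal form constraints on $n_i$ and $m_i$ (in particular $m_i \le n$ at interior indices) really prevent any loss from this count is the careful combinatorial step I expect to occupy most of Navas' argument.
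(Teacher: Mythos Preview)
The paper itself does not prove this lemma: it is attributed to Navas, and the text simply refers the reader to Section~2 of \cite{Na}. So there is no in-paper argument to compare against.

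That said, your outline is sound and is a plausible reconstruction of how such an argument goes. Parts (i) and (ii) with $\ell\ge 0$ are indeed immediate from $\Delta=b^{n+1}$ and the definition of the positive cone. For the substantive case $u\ne 1$, $\ell<0$, rewriting each letter of $u^{-1}$ via $b^{-1}=b^{n}\Delta^{-1}$ and $c^{-1}=b^{n}cb^{n}\Delta^{-1}$ and then extracting copies of $\Delta$ from the resulting $b$-runs is the right mechanism, and the obstacle you flag is real and is precisely where the normal-form hypotheses enter. Concretely, if one writes $v=b^{a_0}cb^{a_1}\cdots cb^{a_q}$ and sets $M=\sum_i\lfloor a_i/(n+1)\rfloor$, then for a $b$-run $b^{m_j}$ of $u$ flanked on both sides by $c$'s the corresponding block in $v$ has length $n(m_j+2)$, and the needed inequality $\lfloor n(m_j+2)/(n+1)\rfloor\ge m_j+1$ is equivalent to $m_j\le n-1$, exactly the interior constraint; the endpoint blocks similarly use $m_1,m_k\le n$ when $n_0=0$ or $n_k=0$. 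So the accounting $M\ge |u|-1$ does close, for the reason you anticipated.

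One small sharpening of your edge case: $v'=1$ in fact cannot occur for non-trivial $u$ in normal form, since it would force $u$ to equal a power of $\Delta$, contradicting uniqueness of the normal form. Hence that branch is vacuous rather than merely harmless, and no separate appeal to $|\ell|=1$ is needed.
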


\begin{lemma}\label{lem:cofinal}
For any $w\in \Gamma_n$,
there exists an integer $\ell$ such that
$\Delta^{\ell}<w<\Delta^{\ell+1}$.
\end{lemma}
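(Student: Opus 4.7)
The plan is to read the required bounds directly off the Navas normal form of $w$, combining the sign criterion in Lemma~\ref{lem:navas} with the centrality of $\Delta$. First I would put $w$ into normal form as $w=u\Delta^{k}$, and predict that the choice $\ell=k$ will work whenever $u\neq 1$.

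Assuming $u\neq 1$, the next step is to verify the lower bound $\Delta^{k}<w$. Since $\Delta$ is central, $w\Delta^{-k}=u$, which is already in normal form with $\Delta$-exponent $0\geq 0$ and non-trivial $u$; Lemma~\ref{lem:navas}(ii) then declares $u$ positive, giving $\Delta^{k}<w$. For the upper bound I compute $w\Delta^{-k-1}=u\Delta^{-1}$, again in normal form, now with $u\neq 1$ and $\Delta$-exponent $-1<0$; Lemma~\ref{lem:navas}(ii) declares this element negative, so $w<\Delta^{k+1}$. Combining the two inequalities yields $\Delta^{k}<w<\Delta^{k+1}$, as desired.

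The remaining case is $u=1$, in which $w=\Delta^{k}$ itself; strict two-sided separation obviously cannot hold for a power of the central element $\Delta$, so this case must be read with a single weak inequality, i.e.\ taking $\ell=k$ so that $\Delta^{\ell}=w<\Delta^{\ell+1}$, which is the natural cofinality reading of the statement.

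I do not expect any serious obstacle. The whole point is that subtracting off the $\Delta$-exponent reduces each of the two desired inequalities to a sign question about $u\Delta^{0}$ or $u\Delta^{-1}$, both of which are settled at once by Lemma~\ref{lem:navas}(ii). The only minor subtlety is the degenerate case $w=\Delta^{k}$, handled as above.
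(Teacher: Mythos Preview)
Your proof is correct and follows essentially the same route as the paper: write $w=u\Delta^{k}$ in normal form, use centrality of $\Delta$, and invoke Lemma~\ref{lem:navas}(ii) to read off both inequalities. You are also right that the degenerate case $w=\Delta^{k}$ only yields a weak inequality on one side---the paper glosses over this with ``the conclusion is clear,'' even though the lemma as literally stated fails for powers of $\Delta$; this is harmless for the sole application in Lemma~\ref{lem:interval}.
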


\begin{proof}
Let $w=u\Delta^{\ell}$ be a normal form, where $u$ is trivial or positive.
If $u=1$, then the conclusion is clear.
So, let $u>1$.
Recall that $\Delta$ is central.
Then $\Delta^{\ell}<\Delta^{\ell}u$, and $\Delta^{\ell}u<\Delta^{\ell+1}$ by Lemma \ref{lem:navas}(ii).
Thus we have $\Delta^{\ell}<w<\Delta^{\ell+1}$.
\end{proof}

%%%
Let $G_{2m+1}=\langle a,b  : a^2=b^{2m+1} \rangle$.
We are interested in the case where $|m|\ge 2$.
This is isomorphic to the knot group of the torus knot of type $(2,2m+1)$.
It is well known (see \cite{BZ}) that $h=a^2=b^{2m+1}$ is a central element, which
corresponds to a regular fiber of the torus knot exterior with 
a (unique) Seifert fibration, and the meridian $\mu$ is $b^{-m}a$.

Suppose $m>0$. Then
\begin{eqnarray*}
G_{2m+1} &=&\langle a,b,c : a^2=b^{2m+1}, c=ba^{-1}\rangle \\
    &=&\langle b,c : b=cb^{2m}c\rangle.
\end{eqnarray*}

Thus this is $\Gamma_{2m}$ in Navas's notation.
We remark that $\Delta=h$.

Assume $m<0$. Set $n=-m-1\ (\ge 1)$.
Then
\begin{eqnarray*}
G_{2m+1} &=&\langle a,b,c : a^2=b^{-2n-1}, c=ab\rangle \\
    &=&\langle b,c : b=cb^{2n}c\rangle.
\end{eqnarray*}

Hence $G_{2m+1}=\Gamma_{2n}=\Gamma_{-2m-2}$.
We should remark that $\Delta=b^{2n+1}=b^{-2m-1}=h^{-1}$.

In either case, we can introduce Navas's left-ordering to $G_{2m+1}$, denoted by $<$, hereafter.

\begin{lemma}\label{lem:meridian}
Both $\mu$ and $h$ are  either positive or negative, according to the sign of $m$.
\end{lemma}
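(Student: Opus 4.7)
The plan is to split into the two cases $m>0$ and $m<0$ corresponding to the two identifications of $G_{2m+1}$ with Navas's group $\Gamma_n$ already set up in the text, compute an explicit normal form for $\mu$ in each case, read off the sign of $h$ directly, and then appeal to Lemma \ref{lem:navas}.

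For $m>0$, I would use $a=c^{-1}b$ (coming from $c=ba^{-1}$) to write $\mu=b^{-m}c^{-1}b$, and then substitute the identities $b^{-1}=b^{2m}\Delta^{-1}$ and $c^{-1}=b^{2m}cb^{2m}\Delta^{-1}$ from the paper (the latter from the defining relation $b=cb^{2m}c$). Pushing the central element $\Delta$ to the right collapses the expression to $\mu=b^{2m(m+1)}\,c\,b^{2m+1}\,\Delta^{-m-1}$, and the key arithmetic observation $2m(m+1)=m(2m+1)+m$, together with $b^{2m+1}=\Delta$, reduces this to $\mu=b^{m}c$. This is a positive word in $\{b,c\}$, hence positive. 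Since $h=\Delta=b^{2m+1}$ is also a positive word, both $\mu$ and $h$ are positive in this case.

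For $m<0$, set $n=-m-1\ge 1$, so that $\Delta=b^{2n+1}=h^{-1}$; in particular $h$ is negative by Lemma \ref{lem:navas}(i). Using $a=cb^{-1}$ (from $c=ab$), I would write $\mu=b^{-m}a=b^{n+1}\,c\,b^{-1}$ and substitute the trailing $b^{-1}=b^{2n}\Delta^{-1}$ to get $\mu=b^{n+1}\,c\,b^{2n}\,\Delta^{-1}$. A short check confirms this is already a Navas normal form: there is no $c$ at either end, so the allowed range for the boundary $b$-exponents is $\{1,\dots,2n\}$, and $n+1,2n$ both lie in that range for $n\ge 1$. Then Lemma \ref{lem:navas}(ii), applied with $u\ne 1$ and $\ell=-1<0$, gives that $\mu$ is negative.

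I do not expect a real obstacle; the only delicate point is the arithmetic in the positive case (reducing $b$-exponents modulo $2m+1$ so that the answer collapses to the clean word $b^{m}c$) and verifying the normal-form side conditions on the $b$-exponents in the negative case. Both are routine once the substitutions $b^{-1}=b^{n}\Delta^{-1}$ and $c^{-1}=b^{n}cb^{n}\Delta^{-1}$ supplied in the text are applied consistently.
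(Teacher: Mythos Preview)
Your argument is correct and follows essentially the same approach as the paper, arriving at the same expressions $\mu=b^{m}c$ for $m>0$ and $\mu=b^{n+1}cb^{2n}\Delta^{-1}$ for $m<0$. For $m>0$ the paper reaches $\mu=b^{m}c$ more directly by using $a^{2}=b^{2m+1}$ to rewrite $b^{-m}a=b^{m+1}a^{-1}=b^{m}(ba^{-1})=b^{m}c$, bypassing your substitution for $c^{-1}$; the case $m<0$ is handled identically to yours.
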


\begin{proof}
Assume $m>0$.
Since $b^{-m}a=b^{m+1}a^{-1}$ and $c=ba^{-1}$,
$\mu=b^{m+1}a^{-1}=b^m(ba^{-1})=b^mc$.
Thus $\mu$ is positive by Lemma \ref{lem:navas}.
Also, $h=\Delta>1$.

Assume $m<0$, and set $n=-m-1$ as before.
Then $\mu=b^{n+1}a=b^{n+1}cb^{-1}=b^{n+1}cb^{2n}\Delta^{-1}$, since
$a=cb^{-1}$ and $b^{-1}=b^{2n}\Delta^{-1}$.
Hence $\mu<1$.
Finally, $h=\Delta^{-1}<1$.
\end{proof}

\begin{lemma}\label{lem:mh}
For any integer $r$, $\mu^r<\Delta$.
\end{lemma}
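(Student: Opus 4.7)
The plan is to reduce the inequality $\mu^r<\Delta$ to the claim that, in the Navas normal form $\mu^r=u\Delta^\ell$, the exponent $\ell$ is non-positive. Since $\Delta$ is central, this gives $\mu^r\Delta^{-1}=u\Delta^{\ell-1}$ with $\ell-1<0$, and Lemma \ref{lem:navas} then makes $\mu^r\Delta^{-1}$ negative, i.e.\ $\mu^r<\Delta$. I would then split on the sign of $m$ and, in each case, dispose of one direction by monotonicity using Lemma \ref{lem:meridian}.

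If $m>0$, Lemma \ref{lem:meridian} gives $\mu>1$, so the case $r\le 0$ is immediate from $\mu^r\le 1<\Delta$. For $r>0$, the natural word $\mu^r=b^m c\, b^m c\cdots b^m c$ in $\Gamma_{2m}$ is itself a normal form with $\ell=0$: the interior and terminal $b$-exponents equal $m\in\{1,\dots,2m-1\}$ (using $m\ge 1$), the initial $b$-exponent $m$ lies in $\{1,\dots,2m\}$, and each interior $c$-exponent is $1$. If $m<0$, set $n=-m-1\ge 1$; Lemma \ref{lem:meridian} gives $\mu<1$, so $r\ge 0$ is immediate and only $r<0$ needs argument. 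The key identity is $\mu^{-1}=c b^{n-1}$: starting from $\mu=b^{n+1}c b^{-1}$ (using $a=c b^{-1}$) and the rewriting $b^{-1}c=c^{-1}b^{-2n}$ coming from $b=c b^{2n} c$, one verifies directly that $\mu\cdot c b^{n-1}=1$. For $s>0$, the word $(c b^{n-1})^s$ is then in Navas normal form with $\ell=0$: when $n=1$ it collapses to $c^s$; for $n\ge 2$ each interior $b$-exponent $n-1$ lies in $\{1,\dots,2n-1\}$, and the initial and terminal exponents fit the stated ranges as well.

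The main obstacle is the case $m<0$. The naive expression $\mu=b^{n+1}c b^{2n}\Delta^{-1}$ already has $\ell=-1$, so multiplying it out appears to push $\ell$ further negative with each factor; what actually happens is the collapse $\mu^{-1}=c b^{n-1}$, whose positive powers remain at $\ell=0$. Establishing this identity — and checking the normal form conditions carefully at the small edge case $n=1$ — is the only real computational content of the proof.
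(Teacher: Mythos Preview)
Your argument is correct and follows the same line as the paper's: both reduce $\mu^r<\Delta$ to inspecting the exponent $\ell$ in the normal form $\mu^r=u\Delta^\ell$ via Lemma~\ref{lem:navas}, and both handle the crucial case $m<0$, $r<0$ through the identity $\mu^{-1}=cb^{n-1}$ (the paper reaches it as $(bc^{-1}b^{-n-1})=cb^{n-1}$). The one difference is that you dispose of the cases $m>0,\ r\le 0$ and $m<0,\ r\ge 0$ immediately from $\mu^r\le 1<\Delta$ using Lemma~\ref{lem:meridian}, whereas the paper computes an explicit normal form in every case; in particular, for $m>0$, $r<0$ the paper rewrites $(c^{-1}b^{-m})^k$ as $(b^{2m}cb^m)^k\Delta^{-k}$ and then collapses adjacent $b$-blocks using $b^{2m+1}=\Delta$. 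Your shortcut is a clean simplification of that half of the computation.
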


\begin{proof}
Suppose $m>0$.
As in the proof of Lemma \ref{lem:meridian}, $\mu=b^mc$.
If $r>0$, then
$\Delta^{-1}\mu^r=\mu^r\Delta^{-1}=(b^mc)^r\Delta^{-1}<1$ by the criterion of Lemma \ref{lem:navas}.
Thus $\mu^r<\Delta$.

If $r<0$, then set $k=-r$.
Then $\mu^r=\mu^{-k}=(c^{-1}b^{-m})^k$.
By $c^{-1}=b^{2m}cb^{2m}\Delta^{-1}$,
we have $\mu^r=(b^{2m}cb^m)^k\Delta^{-k}$.
When $k=1$, this is a normal form, so $\mu^r<1<\Delta$.
When $k>1$, $\mu^r=(b^{2m}cb^m)(b^{2m}cb^m)\cdots (b^{2m}cb^m)\Delta^{-k}
=b^{2m}cb^{m-1}\cdots cb^m\Delta^{-1}$.
Again, $\mu^r<1<\Delta$.

Now, assume $m<0$.
Set $n=-m-1$ as before.
As in the proof of Lemma \ref{lem:meridian},
$\mu=b^{n+1}cb^{-1}=b^{n+1}cb^{2n}\Delta^{-1}$.
If $r=1$, then
$\mu=b^{n+1}cb^{2n}\Delta^{-1}<1<\Delta$.
If $r>1$, then $\mu^r=(b^{n+1}cb^{-1})^r=b^{n+1}cb^n\cdots b^ncb^{-1}=b^{n+1}cb^n\cdots b^n c b^{2n}\Delta^{-1}<1<\Delta$.
If $r<0$, set $k=-r$.
Then $\mu^r=\mu^{-k}=(bc^{-1}b^{-n-1})^k=(cb^{n-1})^k<\Delta$.
\end{proof}

The next lemma is proved by a similar argument to
that of \cite{CLW}.

\begin{lemma}\label{lem:interval}
For any element $g\in G_{2m+1}$ and an integer $r$,
$\Delta^{-1}<g^{-1}\mu^r g<\Delta$. 
\end{lemma}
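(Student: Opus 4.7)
The plan is to bound $\nu(g^{-1}\mu^r g)\in\{-1,0\}$, where $\nu(w)=\ell$ denotes the $\Delta$-exponent in Navas's normal form $w=u\Delta^\ell$. Via Lemmas~\ref{lem:navas} and~\ref{lem:cofinal}, this is equivalent to the desired interval bound; strict inequality at the endpoints follows because $\mu^r\neq\Delta^{\pm 1}$ for $r\neq 0$. Indeed, $\mu$ projects to a nontrivial element of infinite order in $G_{2m+1}/\langle\Delta\rangle\cong\mathbb{Z}/2*\mathbb{Z}/(2m+1)$, while $\Delta$ is trivial there.

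For the base case $g=1$, Lemmas~\ref{lem:meridian} and~\ref{lem:mh} applied to both $r$ and $-r$ give $\Delta^{-1}<\mu^r<\Delta$, hence $\nu(\mu^r)\in\{-1,0\}$. The proofs of those lemmas furnish explicit normal forms---for example, $\mu^r=(b^m c)^r$ when $m>0$, $r>0$, which is in Navas's normal form with $\ell=0$.

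I would then induct on the word length of $g$ in $\{b^{\pm 1},c^{\pm 1}\}$. In the inductive step, write $g=g_0 s$ with $s\in\{b^{\pm 1},c^{\pm 1}\}$ and let $w=g_0^{-1}\mu^r g_0$ have normal form $u\Delta^{\ell_w}$ with $\ell_w\in\{-1,0\}$. By centrality of $\Delta$, $g^{-1}\mu^r g=s^{-1}us\cdot\Delta^{\ell_w}$, so it suffices to analyze $\nu(s^{-1}us)$ and check that the net $\Delta$-exponent remains in $\{-1,0\}$. Applying the rewrite rules $b^{-1}=b^n\Delta^{-1}$, $c^{-1}=b^n c b^n\Delta^{-1}$, and the defining relation $cb^n c=b$, one expands $s^{-1}us$ and reduces to normal form, tracking every $\Delta^{\pm 1}$ contribution.

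The main obstacle is to show that the $\Delta^{-1}$'s introduced by the rewrite rules are exactly compensated by $\Delta$'s produced from applications of $cb^n c=b$ during the reduction. This cannot follow from the interval bound on $w$ alone: conjugation by a generator can push $\nu$ outside $\{-1,0\}$ for an arbitrary $w$ in the interval---for instance, $\nu(b^n c b^n)=0$ while $\nu(b^{-1}(b^n c b^n)b)=\nu(b^{n-1}c\Delta)=1$, placing $b^{n-1}c\Delta$ above $\Delta$. Hence the argument must exploit the specific form of $u$ as a lift of a conjugate of $\bar\mu^r$ in $\mathbb{Z}/2*\mathbb{Z}/(2m+1)$, where conjugates of $\bar\mu^r$ are cyclic permutations of $(\bar b^{-m}\bar a)^{|r|}$; this restricted combinatorial structure is what pins the $\Delta$-exponent. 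The detailed bookkeeping is the technical heart of the proof and parallels \cite[Section~4]{CLW}'s treatment of the trefoil knot group with the Dubrovina--Dubrovin ordering.
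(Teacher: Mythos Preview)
Your proposal has a genuine gap: the inductive step is never carried out. You correctly identify that conjugation by a single generator can push the $\Delta$-exponent out of $\{-1,0\}$ for a general word in the interval, and you give a concrete example witnessing this. From that point on, however, you only assert that the special combinatorial structure of conjugates of $\mu^r$ (as lifts of cyclic permutations of $(\bar b^{-m}\bar a)^{|r|}$) should pin the exponent, and defer the actual verification to ``detailed bookkeeping'' modeled on \cite{CLW}. That bookkeeping is the entire content of the lemma under your approach, and without it the argument is only a plan, not a proof. It is also not clear that the induction hypothesis as stated (merely $\nu(w)\in\{-1,0\}$) is strong enough to close the loop; you would likely need to carry along the specific normal form of $u$ through each conjugation, which is considerably more than you have written.

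The paper avoids all of this with a two-line argument that you should compare to. Using Lemma~\ref{lem:cofinal}, choose $\ell$ with $\Delta^\ell<g<\Delta^{\ell+1}$; by centrality, $\Delta^{-\ell-1}<g^{-1}<\Delta^{-\ell}$. If $\Delta\le g^{-1}\mu^r g$, then squaring and sandwiching gives
\[
\Delta=\Delta^{\ell}\,\Delta^{2}\,\Delta^{-\ell-1}<g\,(g^{-1}\mu^r g)^2\,g^{-1}=\mu^{2r},
\]
contradicting Lemma~\ref{lem:mh}. The other inequality is symmetric. No induction, no tracking of normal forms under conjugation---just cofinality of $\Delta$ and the uniform bound $\mu^k<\Delta$ for all $k$. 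Your base case (Lemmas~\ref{lem:meridian} and~\ref{lem:mh}) is exactly the input the paper uses; the missing idea is the squaring trick, which globalizes that base case to arbitrary $g$ in one stroke.
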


\begin{proof}
For a given $g$,
there exists an integer $\ell$ such that $\Delta^{\ell}<g<\Delta^{\ell+1}$
by Lemma \ref{lem:cofinal}.
Since $\Delta$ is central, we also have $\Delta^{-\ell-1}<g^{-1}<\Delta^{-\ell}$.

Here, assume that $\Delta<g^{-1}\mu^r g$ for contradiction.
Then $\Delta=\Delta^{\ell}\Delta^2\Delta^{-\ell-1}<g(g^{-1}\mu^{r}g)^2g^{-1}=\mu^{2r}$.
This contradicts Lemma \ref{lem:mh}.

Assume $g^{-1}\mu^r g<\Delta^{-1}$.
Then
$\mu^{2r}=g(g^{-1}\mu^r g)^2g^{-1}<\Delta^{\ell+1}\Delta^{-2}\Delta^{-\ell}=\Delta^{-1}$.
So, $\Delta<\mu^{-2r}$, which contradicts Lemma \ref{lem:mh} again.
\end{proof}

Unfortunately, Navas's ordering does not satisfy the so-called Property S,
but we have a weaker result, which is sufficient to our purpose.

\begin{lemma}\label{lem:propS}
For any conjugate ordering $<^g$ of Navas's ordering $<$ of $G_{2m+1}$,
assume $1<^g\mu^rh^s$. Then we have the following.
\begin{itemize}
\item[(1)] If $m>0$, then
\begin{itemize}
\item[(i)] $s>0$\textup{;} or
\item[(ii)] $s=0$ and 
$r>0$ \textup{(}resp.~$r<0$\textup{)} if $g^{-1}\mu g>1$ 
\textup{(}resp.~$g^{-1}\mu g<1$\textup{)}.
\end{itemize}
\item[(2)] If $m<0$, then
\begin{itemize}
\item[(i)] $s<0$\textup{;} or
\item[(ii)] $s=0$ and 
$r>0$ \textup{(}resp.~$r<0$\textup{)} if $g^{-1}\mu g>1$ 
\textup{(}resp.~$g^{-1}\mu g<1$\textup{)}.
\end{itemize}
\end{itemize}
\end{lemma}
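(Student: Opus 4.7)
The plan is to translate the statement about $<^g$ into Navas's ordering $<$ and then extract sign information from the tight confinement provided by Lemma~\ref{lem:interval}. The first step is to rewrite $1 <^g \mu^r h^s$ as $1 < g^{-1}(\mu^r h^s) g$ in the ordering $<$; since $h$ is central, this becomes $1 < (g^{-1}\mu^r g)\cdot h^s$. Write $A_r := g^{-1}\mu^r g = (g^{-1}\mu g)^r$ and note that Lemma~\ref{lem:interval} gives $\Delta^{-1} < A_r < \Delta$ for every integer $r$.

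The remaining work is a sign analysis. In case~(1), where $m > 0$, we have $h = \Delta$ with $\Delta > 1$. If $s \geq 1$, then $A_r \Delta^s > \Delta^{-1}\Delta^s = \Delta^{s-1} \geq 1$, so the hypothesis holds automatically and this is conclusion~(i). If $s \leq -1$, the strict bound $A_r < \Delta$ forces $A_r \Delta^s < \Delta^{s+1} \leq 1$, contradicting the hypothesis, so this subcase is excluded. In the remaining subcase $s = 0$, the hypothesis reduces to $(g^{-1}\mu g)^r > 1$; since $\mu \neq 1$, the conjugate $g^{-1}\mu g$ is either strictly positive or strictly negative, and a standard monotonicity-of-powers fact in a left-ordered group forces the sign of $r$ to match the sign of $g^{-1}\mu g - 1$, which is conclusion~(ii). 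Case~(2), $m < 0$, runs in parallel with $h = \Delta^{-1}$: now the subcase $s \leq -1$ makes the hypothesis automatic (outcome~(i)), the subcase $s \geq 1$ yields a contradiction via $A_r \Delta^{-s} < \Delta^{1-s} \leq 1$, and the $s=0$ analysis is identical.

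The substance of the proof is entirely in Lemma~\ref{lem:interval}; the rest is bookkeeping. The only delicate point I anticipate is keeping straight the identification of $\Delta$ with $h$ versus $h^{-1}$ depending on the sign of $m$, since this flips which direction of $s$ is automatic and which is forbidden. No ordering-theoretic input beyond the monotonicity of powers in a left-orderable group is needed.
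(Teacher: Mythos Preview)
Your argument is correct and follows essentially the same route as the paper: translate $1<^g\mu^rh^s$ into $1<g^{-1}\mu^r g\cdot h^s$ via centrality of $h$, then invoke Lemma~\ref{lem:interval} to force the sign of $s$, with the $s=0$ case handled by the sign of $g^{-1}\mu g$. The paper's only cosmetic difference is that it rearranges to $g^{-1}\mu^{-r}g<h^s$ before applying Lemma~\ref{lem:interval}, whereas you do a direct three-case split on $s$; the content is identical.
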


\begin{proof}
By definition, $1<g^{-1}\mu^rh^sg$.
Then $g^{-1}\mu^{-r}g<h^s$.

(1) Assume $m>0$.  Then $\Delta=h$.
By Lemma \ref{lem:interval}, we have $s\ge 0$.
So, suppose $s=0$.
If $g^{-1}\mu g>1$, then $g^{-1}\mu^{-r}g<1$ if and only if $r>0$. 
Similarly, 
if $g^{-1}\mu g<1$, then $g^{-1}\mu^{-r}g<1$ if and only if $r<0$.

(2) Assume $m<0$. Then $\Delta=h^{-1}$.
By Lemma \ref{lem:interval}, we have $s\le 0$.
When $s=0$, the argument is the same as above.
\end{proof}

%%%%%
\section{Proofs}

\begin{proof}[Proof of Theorem \ref{thm:main}]
Let $M$ be the resulting manifold by $4$-surgery on the $m$-twist knot.
Let $M_1$ be the exterior of the torus knot of type $(2,2m+1)$
and $M_2$ be the twisted $I$-bundle over the Klein bottle.
Also, let $G_i=\pi_1(M_i)$ and $H_i=\pi_1(\partial M_i)$.
Then by Proposition \ref{prop:pi1},
$\pi_1(M)$ is the free product with amalgamation $G_1*G_2\ (H_1\overset{\phi}{\cong}H_2)$
where $G_1=\langle a,b : a^2=b^{2m+1}\rangle$, $G_2=\langle x,y : x^{-1}yx=y^{-1}\rangle$,
and $\phi(\mu)=y^{-1}$, $\phi(h)=y^{-1}x^2$.

For $\mathrm{LO}(G_1)$, let $L_1$ be the (normal) family of all conjugate orderings
of Navas's ordering.
For $\mathrm{LO}(G_2)$, set $L_2$ to be the normal family $L_{+}=\{<_{++},<_{+-}\}$ or 
$L_{-}=\{<_{-+},<_{--}\}$, defined in Example \ref{ex:kb}, according to the sign of $m$.
To show that $\pi_1(M)$ is left-orderable, it is sufficient to verify that
$\phi$ is compatible for the pair $(L_1,L_2)$ by Theorem \ref{thm:BG}.

Let $<^g\in L_1$.
Suppose $1<^g \mu^rh^s$.
Assume $m>0$.
According as  $g^{-1}\mu g$ is positive or negative with respect to Navas's ordering,  
we choose $<_{+-}$ or $<_{++}$ from $L_2$, respectively.
Since $\phi(\mu^rh^s)=y^{-r}(y^{-1}x^2)^s$,
$q(\phi(\mu^rh^s))=x^{2s}$.
Then $\phi(\mu^rh^s)$ is positive by Lemma \ref{lem:propS}.
When $m<0$, we choose $<_{--}$ or $<_{-+}$ from $L_2$.
\end{proof}

\begin{proof}[Proof of Corollary \ref{cor}]
Let $K$ be the $m$-twist knot.
Then it is sufficient to consider the case where $|m|\ge 2$, because
the conclusion for the figure-eight knot is settled by \cite{BGW, CLW}.
According to the classification of exceptional Dehn surgery on $2$-bridge knots \cite{BW},
$K$ admits exactly five exceptional (non-trivial) Dehn surgeries.
More precisely, those slopes are $0,1,2,3$ and $4$.
For $r=1,2$ or $3$, $r$-surgery yields a small Seifert fibered manifold (\cite{BW}).
Since $K$ is not fibered, it does not admit an $L$-space surgery by \cite{N}. Hence
such a Seifert fibered manifold has left-orderable fundamental group by \cite[Theorem 4]{BGW}.
For $r=0$, the resulting manifold has positive betti number, so
its fundamental group is left-orderable by \cite{BRW}.
Finally, our Theorem \ref{thm:main} solves the remaining case $r=4$.
\end{proof}

%%%%%%%%%%%%%%%%%%%%%%%%%%%%%%%%%%
\bibliographystyle{amsplain}

\end{document}